\newtheorem{thm}{Theorem}
\newtheorem{lem}[thm]{Lemma}
\numberwithin{equation}{section} \numberwithin{thm}{section}
\newtheorem{definition}[thm]{Definition}
\newcommand*\patchAmsMathEnvironmentForLineno[1]{%
  \expandafter\let\csname old#1\expandafter\endcsname\csname #1\endcsname
  \expandafter\let\csname oldend#1\expandafter\endcsname\csname end#1\endcsname
  \renewenvironment{#1}%
     {\linenomath\csname old#1\endcsname}%
     {\csname oldend#1\endcsname\endlinenomath}}%
\newcommand*\patchBothAmsMathEnvironmentsForLineno[1]{%
  \patchAmsMathEnvironmentForLineno{#1}%
  \patchAmsMathEnvironmentForLineno{#1*}}%
\newcommand{\dist}{{\rm dist}\,}
\newcommand{\eps}{\varepsilon}
\def\en{\mathbb N}
\def\er{\mathbb R}
\def\H{\mathcal H}
\def\P{\mathcal P}
\def\Q{\mathcal Q}
\newcommand{\co}{\operatorname{co}}
\begin{document}

\title[Nonconvex function which is locally convex outside a discontinuum]{Nonconvex Lipschitz function in plane which is locally convex outside a discontinuum}
\author{Du\v san Pokorn\'y}

\begin{abstract}
We construct a Lipschitz function on $\er^{2}$ which is locally convex on the complement of some totally disconnected compact set but not convex.
Existence of such function disproves a theorem that appeared in a paper by L. Pasqualini and was also cited by other authors.
\end{abstract}
\thanks{The author was supported by a cooperation grant of the Czech and the German science foundation, GA\v CR project no.\ P201/10/J039}
\keywords{convex function, convex set, exceptional set}
\subjclass[2000]{26B25, 52A20}
\maketitle

\section{Introduction}

In his work from $1938$ L. Pasqualini presents a theorem (see \cite[Theorem 51, p. 43]{Pa}) of which the following statement is a reformulation:

{\it
Let $f:\er^d\to\er$ be a continuous function and $M\subset\er^d$ a set not containing any continuum of topological dimension $(d-1)$. 
Suppose that $f$ is locally convex on the complement of $M$.
Then $f$ is convex on $\er^d.$
}

The proof however contains a gap. 
This result also appeared in the survey paper \cite{BuZa}, where the (incorrect) proof was shortly repeated.
Also V.G.~Dmitriev mentions this result in \cite{Dm}, although he provides a wrong reference.

As a counterexample to the theorem of Pasqualini we present the following theorem:
\begin{thm}\label{main}
There is a Lipschitz function $f:\er^{2}\to\er$ and $M\subset\er^{2}$ such that
\begin{itemize}
\item $f$ is locally convex on $\er^{2}\setminus M$,
\item $f$ is not convex on $\er^{2}$,
\item $M$ is compact and totally disconnected,
\item $f$ has compact support.
\end{itemize}
\end{thm}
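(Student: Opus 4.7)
The plan is to construct $M$ as a totally disconnected Cantor-type compact subset of $\er^{2}$, and to obtain $f$ as the uniform limit of Lipschitz, compactly supported functions $f_n$ whose sets of non-local-convexity are contained in shrinking neighbourhoods of $M$. I would start with a simple non-convex building block $f_0$ exhibiting a single concave ridge along a segment $R_0$, for instance $f_0(x,y) = -\eta(x,y)\,|y|$ for a suitable compactly supported cutoff $\eta$; such an $f_0$ is locally convex on $\er^{2}\setminus R_0$, but $R_0$ is a continuum, so further work is required. I would then pass from $f_n$ to $f_{n+1}$ by modifying $f_n$ inside small cross-shaped neighbourhoods around $2^n$ points chosen along the existing ridges according to a nested Cantor-type construction, so that each segment of the ridge set $R_n$ is split into two shorter segments. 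In the limit, $R_n\searrow M$ in the Hausdorff sense, where $M$ is the totally disconnected Cantor set produced by the nested construction.

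To ensure that the limit $f$ is Lipschitz and compactly supported, the modifications at stage $n$ must be supported on sets whose diameters decay geometrically and must respect a common Lipschitz bound. Local convexity of $f$ on $\er^{2}\setminus M$ will then follow from the observation that any $p\notin M$ lies outside $R_n$ for some large $n$; in a small enough neighbourhood of $p$ no subsequent modifications affect the function, so $f$ coincides there with $f_n$, which is by construction locally convex outside $R_n$. Non-convexity of $f$ is preserved by reserving two witness points $a,b$, together with the midpoint $(a+b)/2$, outside every modification region, and verifying that the strict inequality $f_n((a+b)/2)>\tfrac12(f_n(a)+f_n(b))$ persists for every $n$ and hence passes to the limit.

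The main technical obstacle is the cutting step itself. One cannot simply replace $f_n$ on a small disc by a convex function with matching boundary values, since the boundary values of a concave ridge such as $-|y|$ on a small circle are not the boundary values of any convex function on the disc. Therefore the modifications must be carried out on larger, carefully designed cross-shaped neighbourhoods in which the gradient jump across the ridge is redistributed in a controlled manner, diverting the non-convex part of the function to the two shorter sub-ridges while leaving a convex patch at the cut. Ensuring that these local surgeries are compatible across the inductive steps, respect a uniform Lipschitz bound, and leave the macroscopic non-convexity of $f_n$ intact is the heart of the argument.
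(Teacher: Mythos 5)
Your general strategy --- iterated local surgeries with uniformly controlled Lipschitz constants, a nested sequence of ``bad sets'' shrinking to a totally disconnected compactum, and a reserved witness of non-convexity --- is the same in spirit as the paper's. But the specific architecture has a fatal flaw: your exceptional set $M$ ends up being a Cantor set contained in the line $\ell$ carrying the original ridge $R_0$, since each step only replaces segments of $R_n$ by shorter subsegments, so $R_n\subset R_0\subset\ell$ for all $n$. Any compact totally disconnected $M$ contained in a line is removable for this problem, so no amount of care in the cutting step can make the construction work. Indeed, suppose $f$ is continuous on $\er^2$ and locally convex on $\er^2\setminus M$ with $M\subset\ell=\er\times\{0\}$. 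A segment parallel to $\ell$ but off $\ell$ misses $M$, so $f$ is convex along it, and letting such segments slide onto $\ell$ gives convexity along segments of $\ell$ by continuity. A segment meeting $\ell$ transversally in a single point $p$ can be translated by arbitrarily small vectors $(t,0)$ with $p+(t,0)\notin M$ (the complement of $M$ in $\ell$ is dense because $M$ is totally disconnected); the translated segments miss $M$ entirely, $f$ is convex along them, and continuity passes the inequality to the limit. Hence $f$ is convex along every segment, i.e.\ convex on $\er^2$. In particular your witness inequality $f((a+b)/2)>\tfrac12(f(a)+f(b))$ cannot survive: the segments $[a,b]+(t,0)$ whose crossing point with $\ell$ avoids $M$ already force the opposite inequality arbitrarily close by, and continuity forces it at $[a,b]$ itself.

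The underlying point, which the paper's remark about $\Ha^1(M)>0$ only hints at, is that a non-removable $M$ must meet every segment in a whole open neighbourhood (in the space of segments) of some fixed witness segment, and a totally disconnected subset of a single line never does. The paper avoids this by starting not from a ridge but from a plateau: $f_0=1$ on $[-1,1]^2$ and $f_0=0$ outside $(-3,3)^2$, with bad set the frame $P_0=[-3,3]^2\setminus(-1,1)^2$. That frame is then repeatedly cut by thin strips around a countable family of lines dense in all positions and directions (Lemma~\ref{keylemma}), each cut threading a thin, locally convex, piecewise affine ``snake'' of the function across the strip; the resulting $M=\bigcap P_i$ is totally disconnected yet still meets every segment joining the inner plateau to the outer zero set, which is exactly what blocks the removability argument above. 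Your diagnosis of the local obstacle (that one cannot just match boundary values of a concave ridge on a disc) is correct and your cross-shaped regions resemble the paper's S-shaped gluing sets, but to salvage the approach you would have to let the bad set proliferate in a genuinely two-dimensional way rather than refine within a fixed segment, at which point you are essentially rebuilding the paper's construction.
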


Note that it is simple observation that such set $M$ cannot be of one dimensional Hausdorff measure $0$ (this fact actually essentially follows from the original argument by Pasqualini).

In this situation it seems natural to call a compact set $M$ {\it convex nonremovable} if there is a nonconvex say Lipschitz function $f$ which is locally convex on the complement of $M$.
Note that in such context it may be relevant that the function from Theorem~\ref{main} is Lipschitz (or continuous) or that it has a compact support or that it is defined on whole $\er^2$, since it is possible that such notion of nonremovabiliity might differ if we a priori assume some of those conditions to hold for $f$.
In some sense the set $M$ from Theorem~\ref{main} may be considered as nonremovable in one of the strongest ways possible.

\section{Preliminaries}
In the paper we will use the following more or less standard notation and definitions:

For $a,b\in\er^d$ and $r>0$ we will denote by $B(a,r)$ the closed ball with center $a$ and radius $r$ and $[a,b]$ will denote the closed line segment with endpoints $a$ and $b$. 
For $A\subset\er^d$ the symbol $\co A$ will mean the convex hull of $A$ and $A^c$ will mean the complement of $A.$
If $l\subset \er^2$ is a line and $\eps>0$ then we define $l(\eps)=\{x\in\er^2:\dist(x,l)<\eps\}.$

A function $f$ defined on a set $A\subset\er^2$ is called $L$-Lipschitz, if for every $x,y\in A$, $x\not =y,$ we have $|f(x)-f(y)|\leq L|x-y|.$

We will call $f$ locally convex on $A$ if for every $x,y$ such that $[x,y]\subset A$ and $\alpha\in[0,1]$ we have $f(\alpha x+(1-\alpha)y)\leq \alpha f(x)+(1-\alpha) f(y).$

Finally, $f$ will be called piecewise affine on $A$ if there is a locally finite triangulation $\Delta$ of $A$ such that $f$ is affine on every triangle from $\Delta.$

\section{Construction of the function}

\begin{definition}
Let $\Q$ be a system of all unions of finite systems of (closed) polytopes in $\er^{2}$.
Let $L>0$, $f:\er^{2}\to\er$ and $P\in\Q.$
We say that a pair $(P,f)$ is $L$-good if
\begin{enumerate}
\item $f$ is $L$-Lipschitz,
\item $f$ is piecewise affine on $P^{c}$,
\item $f$ is locally convex on $P^{c}$.
\end{enumerate}
\end{definition}

The key technical result is the following:

\begin{lem}\label{keylemma}
Let $\eps,L>0$, $l$ line in $\er^{2}$ let $(P,g)$ be a $L$-good.
Then there is an $(L+\eps)$-good pair $(Q,h)$ such that
\begin{enumerate}
\item $Q\subset P$,
\item $h=g$ on $P^{c}$,
\item if $x,y\in Q$ belong to the different component of $\er^{2}\setminus l(\eps)$ then they belong to the different component of $Q.$
\end{enumerate}
\end{lem}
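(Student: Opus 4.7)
My plan is to construct $Q$ as $P$ with a thin open polytopal cut $U$ near $l$ removed, and to define $h$ by modifying $g$ inside $U$ via a convex-envelope construction. Pick $\delta\in(0,\eps)$ small, to be tuned later. For each connected component of $P$ that meets both closed half-planes $\{\dist(\cdot,l)\geq\eps\}$, include in $U$ a thin open convex polytope (say a rectangle) with long axis parallel to $l$, of width at most $\delta$ perpendicular to $l$, cutting across the component and extending slightly past $\partial P$ on both ends perpendicular to $l$. Taking $U$ to be the union of these polytopes and $Q:=P\setminus U$ yields a finite union of closed polytopes contained in $\{\dist(\cdot,l)\geq\delta\}$. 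Since $\delta<\eps$, any two points of $Q$ lying in distinct components of $\er^{2}\setminus l(\eps)$ lie in distinct components of $\{\dist(\cdot,l)\geq\delta\}$ and hence in distinct components of $Q$, verifying condition~(3).

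For the function, set $h=g$ on $P^{c}$ (as required) and on $Q$. On each rectangle $R\subset U$, define $h|_{R}$ to be the convex envelope of $g$ on the reference set $R\cap(P^{c}\cup\partial P)$, i.e.\ the supremum of affine functions bounded above by $g$ on this set. Then $h|_{R}$ is convex and, after suitably refining the triangulation underlying $g$'s piecewise affine structure, piecewise affine on $R$. Local convexity of $h$ on $Q^{c}=P^{c}\cup U$ would then follow by decomposing any segment in $Q^{c}$ into sub-segments in $P^{c}$ (where $h=g$ is locally convex by hypothesis) and sub-segments in a single rectangle (where $h$ is convex by construction); since distinct rectangles are pairwise disjoint and $\partial R\cap P\subset Q$, no segment in $Q^{c}$ can cross from one rectangle to another or pass from $P^{c}$ into a rectangle through $Q$.

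\textbf{The main obstacle} is twofold. First, one needs $h=g$ on $P^{c}$, which requires the envelope on $R$ to coincide with $g$ on the end caps $R\cap P^{c}$; this holds for $\delta$ small because each end cap is a small convex region on which the locally convex $g$ is automatically convex. Second, and more seriously, one must bound the Lipschitz constant of $h$ by $L+\eps$. Although the envelope itself is at most $L$-Lipschitz on $R$, the slope of $h$ can jump across $\partial R\cap P$ between $h|_{R}$ (the envelope) and $h|_{Q}=g$, and a naive estimate gives a jump of order $O(L)$ rather than $O(\eps)$. Overcoming this is the technical heart of the proof and likely requires inserting a narrow buffer layer inside $R$ on which $h$ interpolates smoothly between the convex envelope and $g$, with $\delta$ and the buffer width chosen small in terms of $\eps$ and $L$ so that the resulting slope discrepancy is at most $\eps$.
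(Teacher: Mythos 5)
Your overall skeleton (remove a thin polytopal cut from $P$ inside the strip around $l$, keep $g$ on $P^c$, redefine the function only on the cut) matches the paper's, and your verification of condition (3) is fine. The fatal problem is not the Lipschitz jump you single out as the main obstacle, but the fact that the cut cannot be a \emph{convex} set (a straight rectangle) crossing a component of $P\cap l(\eps)$ with both end caps in $P^c$. Near the two sides of such a component, $g$ coincides with two affine functions $f_1$ and $f_2$ (this is how the paper normalizes the situation). A segment $\sigma$ running along the long axis of your rectangle from a point $p$ of one cap to a point $q$ of the other lies entirely in $Q^c$, and on a neighbourhood of each endpoint inside $\sigma$ you have $h=g=f_i$. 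Local convexity of $h$ on $Q^c$ then forces $\partial_u f_1\le (f_2(q)-f_1(p))/|q-p|\le \partial_u f_2$ in the direction $u$ of $\sigma$, a constraint on the given data that fails in general. Concretely, in the first step of the paper's induction $P_0$ is the frame $[-3,3]^2\setminus(-1,1)^2$ with $f_0\equiv 1$ on the inner side and $f_0\equiv 0$ on the outer side; for a horizontal line through the origin your rectangle would have to carry a convex function on $\sigma$ that is constantly $1$ on an initial subinterval and constantly $0$ on a terminal one, which is impossible. Shrinking $\delta$ does not help: it thins the rectangle but does not shorten it, so the two caps keep their incompatible affine data. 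For the same reason the convex envelope over a reference set containing \emph{both} caps cannot equal $g$ on both of them, so condition (2) fails too; your justification for that step (each cap is a small convex set on which $g$ is convex) ignores that the envelope is global over the rectangle.

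This is exactly the difficulty the paper is built around. Its cut $R$ (Lemma~\ref{had}) is a connected but highly non-convex zigzag of small squares joined by thin corridors, contained in the thin strip around $l$ and arranged so that no line segment inside $(P^{\eps}\setminus P)\cup R$ joins the $f_1$-side to the $f_2$-side. Since local convexity, as defined in the paper, only constrains segments lying entirely in the set, the incompatible slopes of $f_1$ and $f_2$ are never tested against each other; Lemmas~\ref{uvnitr} and~\ref{nakraji} provide the two-affine-piece building blocks on these non-convex neighbourhoods (note their ``either $f$ is already convex or \dots'' dichotomy), and Lemmas~\ref{square} and~\ref{points} control the Lipschitz constant. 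Any correct proof needs some version of this non-convex-cut idea; a convex cut filled with a convex envelope cannot work, no matter how the buffer layer is chosen.
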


We first prove Theorem~\ref{main} using Lemma~\ref{keylemma}

\begin{proof}[Proof of Theorem~\ref{main}]
Choose a sequence $\{x_{n}\}_{n=1}^{\infty}$ dense in the plane and consider any sequence of lines $\{l_{n}\}_{n=1}^{\infty}$ with the property that
for any $i,j\in\en$ there is some $k\in\en$ such that $x_i,x_j\in l_k.$
Choose a sequence $\{\eps_{n}\}_{n=1}^{\infty}\subset(0,\infty)$ such that $\sum_{n=1}^{\infty}\eps_{n}<\infty.$
Then the sequence  $\{l_{n}(\eps_n)\}_{n=1}^{\infty}$ has the property that for every $x,y\in\er^2$, $x\not=y,$ 
there is some $k\in\en$ such that $x$ and $y$ belong to the different component of $\er^2\setminus l_{k}(\eps_k).$

We will proceed by induction and construct a sequence of functions $f_{i}:\er^{2}\to\er$ and a sequence $P_{i}\subset\Q$, $i=0,1,...$, such that for every $i$ the following conditions hold:
\begin{enumerate}
\item pair $(P_{i},f_{i})$ is $(1+\sum_{n=1}^{i}\eps_{n})$-good,
\item if $i>0$ then $P_{i}\subset P_{i-1}$,
\item if $i>0$ then $f_{i}=f_{i-1}$ on $(P_{i-1})^{c}$,
\item if $i>0$ and if $x,y\in P_i$ belong to the different component of $\er^{2}\setminus l_i(\eps_{i})$ then they belong to the different component of $P_{i}.$
\end{enumerate}

To do this let $f_{0}$ be an arbitrary $1$-Lipschitz function on $\er^{2}$ which is equal to $0$ on $((-3,3)^{2})^{c}$ and equal to $1$ on $[-1,1]^{2}$ and put $P_{0}:=[-3,3]^{2}\setminus (-1,1)^{2}.$
Validity of conditions $(1)-(4)$ is obvious.

Now, if we have $f_{i-1}$ and $P_{i-1}$ constructed we obtain $f_{i}$ and $P_{i}$ simply by applying lemma~\ref{keylemma} with
$\eps=\eps_{i},$ $L=(1+\sum_{n=1}^{i-1}\eps_{n}),$ $l=l_{i},$ $P=P_{i-1}$ and $g=f_{i-1}.$
The function $f_{i}$ will be then equal to $h$ from the statement of lemma~\ref{keylemma} and $P_{i}$ will be equal to the corresponding $Q.$
Validity of conditions $(1)-(4)$ follows directly from lemma~\ref{keylemma}.

Put $M:=\cap P_{i}.$
Due to property $(2)$ $M$ is compact and nonempty.
To prove that $M$ is totally disconnected consider $x,y\in M$, $x\not=y.$
By the choice of the sequences $\{l_{n}\}_{n=1}^{\infty}$ and $\{\eps_{n}\}_{n=1}^{\infty}\subset\er^{+}$ there is some $i$ such that
$x$ and $y$ belong to the different component of $\er^{2}\setminus l_{i}(\eps_{i}).$
By property $(3)$ we have that $x$ and $y$ belong to the different component of $P_{i}.$
Using property $(2)$ again we then obtain that $x$ and $y$ belong to the different component of $M$ as well.

Define $\tilde f:M^{c}\to\er$ in such a way that $\tilde f(x)=f_{i}(x)$ whenever $x\in (P_{i})^{c}$.
It is easy to see that the definition of $\tilde f$ is correct due to properties $(2)$ and $(3)$ and the definition of $M$, 
and also that by property $(1)$ the function $\tilde f$ is $(1+\sum_{n=1}^{\infty}\eps_{n})$-Lipschitz and locally convex on $M^{c}$.
By Kirszbraun's theorem there is a $(1+\sum_{n=1}^{\infty}\eps_{n})$-Lipschitz function $f:\er^{2}\to\er$ such that $f=\tilde f$ on $M^{c}.$
Therefore $f$ is locally convex on $M^{c}$ as well. 
Also, $f$ has compact support due to properties $(2)$ and $(3)$, the fact that $P_0$ is compact and that $f_0$ is supported in $P_0.$

It remains to show that $f$ is not convex on $\er^{2},$
but this is easy since
$$
\frac{f(-3,0)+f(3,0)}{2}=0<1=f(0,0).
$$
\end{proof}

The proof of Lemma~\ref{keylemma} is divided into several lemmae.

\begin{lem}\label{ext1}
Let $H\subset\er^2$ be a closed halfplane, $x\in \er^{2}\setminus H$ and $L>0$.
If $f:H\cup \{x\}\to\er$ is $L$-Lipschitz and affine on $H$, then for every $y\in\partial H$ the function
\begin{equation*}
g_y(z) = \left\{
  \begin{array}{l l}
    f(z), &\text{if}\quad z\in H,\\
    \alpha f(x)+(1-\alpha)f(y), & \text{for}\quad z=\alpha x+(1-\alpha)y,\alpha\in[0,1].\\
  \end{array} \right.
\end{equation*}
is $L$-Lipschitz as well.
\end{lem}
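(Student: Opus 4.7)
The plan is to split into three cases based on where the two arguments of $g_y$ lie: (i) both in $H$, (ii) both on $[y,x]$, (iii) one of each. Case (i) is immediate from the $L$-Lipschitz hypothesis on $f$. Case (ii) is also clear: on the segment, $g_y$ is the linear interpolation between $f(y)$ and $f(x)$, so the Lipschitz constant along the segment equals $|f(x)-f(y)|/|x-y|\le L$, the bound coming from the $L$-Lipschitz hypothesis applied to the pair $x,y$.

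For case (iii) I would choose coordinates so that $H=\{(z_1,z_2):z_2\le 0\}$, $y=0$, and $x=(x_1,x_2)$ with $x_2>0$. Since $f$ is affine on $H$, write $f|_H(z)=v\cdot z+c$ with $v=(a,b)$, $|v|\le L$, and $c=f(0)$, and let $\delta=f(x)-(ax_1+bx_2+c)$ be the deviation of $f(x)$ from the affine extension of $f|_H$ to $\er^2$. The key auxiliary estimate, obtained by applying $|f(x)-f(p)|\le L|x-p|$ to the family of points $p=(x_1-r,0)\in\partial H\subset H$ and optimising the resulting quadratic in $r$, is
\[
|bx_2+\delta|\le x_2\sqrt{L^2-a^2}.
\]

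To finish case (iii), take $z=(u_1,u_2)\in H$ and $z'=\alpha x$ with $\alpha\in(0,1]$. A direct computation gives $f(z)-g_y(z')=a(u_1-\alpha x_1)+bu_2-\alpha(bx_2+\delta)$. Setting $r_1=u_1-\alpha x_1$ and $r_2=u_2-\alpha x_2$ (so $|z-z'|^2=r_1^2+r_2^2$ and $-r_2\ge\alpha x_2>0$), the auxiliary estimate combined with $|b|\le\sqrt{L^2-a^2}$ yields
\[
|bu_2-\alpha(bx_2+\delta)|\le\sqrt{L^2-a^2}\,|r_2|,
\]
and a planar Cauchy--Schwarz, using $a^2+(L^2-a^2)=L^2$, then gives $|f(z)-g_y(z')|\le L|z-z'|$. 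The main obstacle is that the naive bound $|f(z)-g_y(z')|\le\alpha|f(z)-f(x)|+(1-\alpha)|f(z)-f(y)|$ overshoots $L|z-z'|$ in general; one has to exploit both that $f$ is affine on $H$ and the geometric fact that $z$ and $z'$ lie on opposite sides of $\partial H$.
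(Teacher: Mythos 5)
Your argument is correct, but it takes a genuinely different route from the paper's. Both proofs reduce to the mixed case ($z\in H$, $z'$ on the segment), but the paper disposes of it with a normalization-plus-scaling trick: after arranging $y=(0,0)$ and $f(y)=0$, the function $g_y$ is positively homogeneous on $H$ and on the ray through $x$, so for $a\in H$ and $b=\alpha x$ one writes $|g_y(a)-g_y(b)|=\alpha\,|f(a/\alpha)-f(x)|\le\alpha L|a/\alpha-x|=L|a-b|$, reducing everything to the Lipschitz hypothesis applied to the pair $a/\alpha,\ x$, both of which lie in the original domain of $f$; no coordinates for $f$ are ever introduced. You instead make explicit the constraint that Lipschitzness along $\partial H$ imposes on $f(x)$: the discriminant computation gives $|bx_2+\delta|\le x_2\sqrt{L^2-a^2}$, and Cauchy--Schwarz closes the estimate. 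I checked your key step: writing $s=\sqrt{L^2-a^2}$ and $r_2=u_2-\alpha x_2$, one has $bu_2-\alpha(bx_2+\delta)=br_2-\alpha\delta$, the auxiliary estimate reads $-(s+b)x_2\le\delta\le(s-b)x_2$, and combining with $s\pm b\ge 0$ and $0<\alpha x_2\le -r_2$ (this is precisely where $u_2\le 0$, i.e.\ the opposite-sides geometry, enters) yields $|br_2-\alpha\delta|\le s|r_2|$ as claimed. The paper's version is shorter and essentially computation-free; yours is longer but identifies the exact admissible range of values of $f(x)$ compatible with an $L$-Lipschitz extension, which is sharper information than the lemma asks for. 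Two trivial edge cases deserve a word in a final write-up: $\alpha=0$ falls back to case (i), and $L^2=a^2$ forces $\delta=-bx_2$, making the estimate degenerate but still valid.
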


\begin{proof}
Without any loss of generality we can suppose that $H=\{(x,y)\in\er^{2}:x\leq 0\}$, $f(y)=0$ and that $y=(0,0).$
This means that $g_y$ is in fact linear on both $H$ and $[x,y].$
Choose $a\in H$ and $b=\alpha x$ for some $\alpha\in[0,1].$
Now,
$$
\begin{aligned}
\left|g_y\left(a\right)-g_y\left(b\right)\right|=&\alpha\left|g_y\left(\frac{1}{\alpha}a\right)-g_y\left(\frac{1}{\alpha}b\right)\right|
=\alpha\left|g_y\left(\frac{1}{\alpha}a\right)-g_y\left(\frac{1}{\alpha}\alpha x\right)\right|\\
=&\alpha\left|g_y\left(\frac{1}{\alpha}a\right)-g_y\left(x\right)\right|
\leq \alpha L\left|\frac{1}{\alpha}a-x\right|
=\alpha L\left|\frac{1}{\alpha}a-\frac{1}{\alpha}\alpha x\right|\\
=&L|a-\alpha x|=L|a-b|.
\end{aligned}
$$
Similarly, if $a=\alpha x$ and $b=\beta x$ for some $\alpha,\beta\in[0,1]$ $\alpha\not=\beta$ we have
$$
|g_y(a)-g_y(b)|=|\alpha f(x)-\beta f(x)|\leq |\alpha-\beta|f(x)\leq|\alpha-\beta|L.
$$
\end{proof}

\begin{lem}\label{square}
Let $\eps,L,K>0.$
Let $f$ be a $L$-Lipschitz function on $[-K,K]^{2}$, which is equal to an affine function $f_{1}$ on $[-K,0]\times[-K,K]$, and $z\in (0,K)\times(-K,K).$
Then there is an $x\in [(0,0),z]$ and $\gamma>0$ such that for every $y\in B(x,\gamma)$ and every $w\in B((0,0),\gamma)\cap (\{0\}\times(-K,K))$ the function 
\begin{equation*}
g_{y,w}(u) = \left\{
  \begin{array}{l l}
    f(u), &\text{if}\quad u\in [-K,0]\times[-K,K],\\
    \alpha f(w)+(1-\alpha)f(y), & \text{for}\quad u=\alpha w+(1-\alpha)y, \alpha\in[0,1]. \\
  \end{array} \right.
\end{equation*}
is $(L+\eps)$-Lipschitz and $|g_{y,w}-f|<\eps$ on $[-K,0]\times[-K,K]\cup [w,y]$.
\end{lem}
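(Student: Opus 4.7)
The plan is to take $x = sz \in [(0,0), z]$ with $s > 0$ and $\gamma > 0$ both sufficiently small (in particular $\gamma < s z_1$, so $y_1 > 0$ for every $y \in B(x, \gamma)$), and to argue that the two required conditions become quantitatively easier as $|x|$ and $|y - w|$ shrink.

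The Lipschitz bound reduces to checking cross pairs $a \in [-K, 0] \times [-K, K]$ and $b = \alpha w + (1-\alpha) y \in [w, y]$, because $g_{y,w} = f$ is $L$-Lipschitz on the rectangle and on $[w, y]$ the function is the affine interpolation between $f(w)$ and $f(y)$ with modulus of slope $|f(y) - f(w)|/|y - w| \le L$. Using $f = f_1$ on the rectangle, $w \in \{0\} \times [-K, K]$, and affinity of $f_1$, one computes
\[
g_{y,w}(a) - g_{y,w}(b) \;=\; \nabla f_1 \cdot (a - b) \;-\; (1 - \alpha) \bigl(f(y) - f_1(y)\bigr).
\]
Denote $A := (f(y) - f(0, y_2))/y_1$ and $c := \nabla f_1$. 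Optimizing $|g_{y,w}(a) - g_{y,w}(b)|/|a - b|$ over $a = (0, b_2 + \sigma)$ (which stays in the rectangle for small $|\sigma|$) gives the worst-case ratio $\sqrt{A^2 + c_2^2}$.

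The core step is bounding $\sqrt{A^2 + c_2^2} \le L + \eps$. Applying the $L$-Lipschitz property of $f$ to the pair $(y_1, y_2)$, $(0, y_2 + \Delta)$ for every $\Delta$ with $y_2 + \Delta \in [-K, K]$ gives $|A y_1 - c_2 \Delta| \le L \sqrt{y_1^2 + \Delta^2}$. Squaring rearranges to the quadratic inequality
\[
(c_2^2 - L^2) \Delta^2 - 2 A c_2 y_1 \Delta + (A^2 - L^2) y_1^2 \;\le\; 0.
\]
If $A^2 + c_2^2 > L^2$, a discriminant computation shows this quadratic (with nonpositive leading coefficient) is strictly positive on an interval of length $O(y_1)$ centred near the origin; taking $y_1$ small enough that this interval lies in $[-K - y_2, K - y_2]$ contradicts the Lipschitz inequality. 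The remaining subtlety is the edge case in which $|A|$ is so small that the optimizing $\Delta$ escapes the allowed range; then $|A|$ is itself of order $y_1$, so $\sqrt{A^2 + c_2^2} \le |c_2| + O(y_1) \le L + \eps$ for $y_1$ small.

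For closeness, $g_{y,w} \equiv f$ on the rectangle, and on $[w, y]$,
\[
|g_{y,w}(b) - f(b)| \;\le\; \alpha |f(w) - f(b)| + (1-\alpha)|f(y) - f(b)| \;\le\; 2 \alpha (1-\alpha) L |y - w| \;\le\; \tfrac{L}{2} |y - w|,
\]
which is $< \eps$ once $|y - w| \le |x| + 2\gamma < 2 \eps / L$. Choosing $s$ and $\gamma$ small enough to satisfy both the Lipschitz threshold on $y_1$ and the closeness threshold on $|y - w|$ finishes the proof. I expect the main obstacle to be the discriminant argument bounding $\sqrt{A^2 + c_2^2}$; once that is in place, everything else is direct estimation.
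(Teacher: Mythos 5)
Your argument is correct, but it takes a genuinely different route from the paper. The paper proceeds by a blow-up/compactness argument: it extracts a sequence $x_i\to(0,0)$ along $[(0,0),z]$ with $f(x_i)/|x_i|\to s$, passes to the rescaled limit configuration (the affine function $f_1$ on a halfplane together with the value $s$ at $\tilde z=z/|z|$), shows this limit is $L$-Lipschitz, invokes Lemma~\ref{ext1} to see that the segment-interpolation of such a two-piece configuration costs nothing in the Lipschitz constant, and then perturbs ($\tilde s$ near $s$, $t$ near $\tilde z$, $v$ on the axis) before rescaling back; this is why the paper needs a \emph{specially chosen} $x=x_i$. You instead give a direct quantitative estimate: reducing everything to the single quantity $\sqrt{A^2+c_2^2}$ with $A=(f(y)-f_1(0,y_2))/y_1$ and bounding it by $L+\eps$ via the Lipschitz inequality applied to the pair $y$, $(0,y_2+\Delta)$ in the near-optimal direction $\Delta=-c_2y_1/A$ (with the escape case $|A|=O(y_1)$ handled separately). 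This avoids the compactness step entirely and in fact proves a slightly stronger statement: \emph{every} sufficiently small $x\in[(0,0),z]$ works, not just a selected one. Your closeness estimate is also cleaner than the paper's one-line remark. The one step you assert without proof is that the worst cross-pair ratio is attained at $a=(0,b_2+\sigma)$, i.e.\ that points $a$ with $a_1<0$ give nothing worse; this is true and follows either from the scaling trick in the paper's Lemma~\ref{ext1} or from the observation that for $u=a-b=r\omega$ the ratio $|\nabla f_1\cdot\omega+(c_1-A)b_1/r|$ is convex in $1/r\in(0,|\omega_1|/b_1]$ and hence maximized at an endpoint, the endpoint $r=b_1/|\omega_1|$ being exactly $a_1=0$ and the endpoint $1/r=0$ giving only $|\nabla f_1|\le L$. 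You should also record that the auxiliary point $(0,y_2+\Delta^\ast)$ stays in $[-K,K]^2$, which holds precisely because $y_2$ is small and $|\Delta^\ast|\le K/2$ outside the escape case. With those two justifications written out, the proof is complete.
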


\begin{proof}
Without any loss of generality we can suppose that $K=L=1$ and that $f(0,0)=0.$
Since $f$ is $1$-Lipschitz we can find a sequence $\{x_{i}\}_{i=1}^{\infty}\subset[(0,0),z]$ converging to $(0,0)$ such that for some $s\in[-1,1]$
\begin{equation}\label{convergence}
s_{i}:=\frac{f(x_{i})}{|x_{i}|}\to s\quad   \text{as} \quad i\to\infty.
\end{equation}
Consider now the sequence of functions 
$h_{i}:[-\frac{1}{|x_{i}|},0]\times[-\frac{1}{|x_{i}|},\frac{1}{|x_{i}|}]\cup \{\frac{z}{|z|}=:\tilde z\}\to\er$ defined as
$$
h_{i}(u):=\frac{1}{|x_{i}|}f\left(|x_{i}|u\right).
$$
Then $h_{i}$ is $1$-Lipschitz for every $i$.
Since $f$ is equal to an affine function $f_{1}$ on $[-1,0]\times[-1,1]$ and $f(0,0)=0$ we have $h_{i}=f_{1}$ on $[-\frac{1}{|x_{i}|},0]\times[-\frac{1}{|x_{i}|},\frac{1}{|x_{i}|}].$
Also $h_{i}(\tilde z)=s_{i}.$
Therefore by $(\ref{convergence})$ the function $h:(-\infty,0]\times(-\infty,\infty)\cup\{\tilde z\}\to\er$ 
which is equal to $f_{1}$ on $(-\infty,0]\times(-\infty,\infty)$ and such that $h(\tilde z)=s$ is also $1$-Lipschitz.

Consider $\tilde\gamma>0$ such that $\tilde\gamma<\frac{\eps \tilde z_{1}}{4}$ (here by $\tilde z_1$ we mean the first coordinate of $\tilde z$)
and such that $\frac{|v-\tilde z|}{|v-\tilde z|-\tilde\gamma}<1+\frac{\eps}{2}$ for every $v\in (-\infty,0]\times(-\infty,\infty).$

Now, for every $\tilde s\in[s-\tilde\gamma,s+\tilde\gamma]$, $v\in (-\infty,0]\times(-\infty,\infty)$ and $u\in B(\tilde z,\tilde\gamma)$
$$
\begin{aligned}
\frac{f_{1}(v)-\tilde s}{|v-u|}\leq& \frac{|f_{1}(v)-s|}{|v-u|}+\frac{|s-\tilde s|}{|v-u|}
\leq \frac{|f_{1}(v)-s|}{|v-\tilde z|-\tilde\gamma}+\frac{\tilde\gamma}{|v-\tilde z|-\tilde\gamma}\\
\leq& \frac{|f_{1}(v)-s|}{|v-\tilde z|}\cdot\frac{|v-\tilde z|}{|v-\tilde z|-\tilde\gamma}+\frac{2\tilde\gamma}{\tilde z_{1}}
\leq \left(1+\frac{\eps}{2}\right)+\frac{\eps}{2}=1+\eps.
\end{aligned}
$$
Therefore, by lemma~\ref{ext1} for every $\tilde s\in[s-\tilde\gamma,s+\tilde\gamma]$,
$v\in \{0\}\times(-\infty,\infty)$  and $t\in B(\tilde z,\tilde\gamma)$ the function
\begin{equation*}
\tilde h_{v,t,\tilde s}(u) = \left\{
  \begin{array}{l l}
    f_{1}(u), &\text{if}\quad u\in (-\infty,0]\times(-\infty,\infty),\\
    (1-\alpha)\tilde s+\alpha f_{1}(v), & \text{for}\quad u=(1-\alpha)t+\alpha v,\alpha\in[0,1].\\
  \end{array} \right.
\end{equation*}
is $(1+\eps)$-Lipschitz as well.

Choose $i$ such that $s_{i}\in[s-\frac{\tilde\gamma}{2},s+\frac{\tilde\gamma}{2}]$ and put $x=x_{i}$ and $\gamma=\frac{|x|\tilde\gamma}{2}.$
Now, consider some $y\in B(x,\gamma)$ and some $w\in B((0,0),\gamma)\cap \{0\}\times(-1,1)$ and let $g_{y,w}$ be as in the statement on the lemma.
First we will prove that $g_{y,w}$ is $(1+\eps)$-Lipschitz.
To do this we first observe that $\frac{1}{|x|}g_{y,w}(\frac{\cdot}{|x|})$ is equal to $\tilde h_{\frac{w}{|x|},\frac{y}{|x|},\frac{f(y)}{|x|}}(\cdot)$, where the first function is defined.
Now, we have $\frac{w}{|x|}\in \{0\}\times(-\infty,\infty)$,
$$
\left|\frac{y}{|x|}-\tilde z\right|=\left|\frac{y}{|x|}-\frac{x}{|x|}\right|=\frac{|y-x|}{|x|}\leq\frac{|x|\tilde\gamma}{2|x|}\leq\tilde\gamma,
$$
which means $\frac{y}{|x|}\in B(\tilde z,\tilde\gamma)$ and finally
$$
\begin{aligned}
\left|\frac{f(y)}{|x|}-s\right|=&\left|\frac{f(y)-f(x)+f(x)}{|x|}-s\right|\leq \left|\frac{f(y)-f(x)}{|x|}\right|+\left|\frac{f(x)}{|x|}-s\right|\\
\leq& \frac{|y-x|}{|x|}+\frac{\tilde\gamma}{2}\leq\frac{\frac{|x|\tilde\gamma}{2}}{|x|}+\frac{\tilde\gamma}{2}
=\frac{\tilde\gamma}{2}+\frac{\tilde\gamma}{2}=\tilde\gamma.
\end{aligned}
$$
which means that $\frac{f(y)}{|x|}\in[s-\tilde\gamma,s+\tilde\gamma]$ and we are done since $\frac{1}{|x|}g_{y,w}(\frac{\cdot}{|x|})$ and $g_{y,w}$ have the same Lipschitz constant.

To finish the proof it is now sufficient to observe that if we additionally choose $x_i$ small enough we obtain also $|g_{\eps}-f|<\eps$ on $[-1,0]\times[-1,1]\cup [w,y]$.
\end{proof}

\begin{lem}\label{points}
Let $L,\eps,\delta>0$, $a<b$ and $c<d$ be given.
Let 
$$
P=\co\{(-1,a),(-1,b),(1,c),(1,d)\}
$$ 
and 
$$
P^{\eps}=\co\{(-1,a-\eps),(-1,b+\eps),(1,c-\eps),(1,d+\eps)\}.
$$
Suppose that $f$ is a $L$-Lipschitz function defined on $\er^2$ which is locally affine on $P^{\eps}\setminus P.$
Then there are
$$
\frac{a+c}{2}=:a_{0}<a_{1}<...<a_{n-1}<a_{n}:=\frac{b+d}{2}
$$
and $\frac{1}{2}>\kappa>0$
such that, using the notation defined below, the function $g_\kappa:P^{\eps}\setminus (P^{\circ}\setminus[-\kappa,\kappa]\times\er)\to\er$ defined as $g_\kappa(z_i^\pm)=f(z_i^\pm)$ for $i=0,n$, $g_\kappa(z_i^\pm)=f(z_i)$ for $i=1,...,n-1$ and
\begin{equation*}
g_{\kappa}(x) = \left\{
  \begin{array}{l l}
    f(x), &\text{if}\quad x\in P^{\eps}\setminus P,\\
    \alpha g(z_i^+)+\beta g(z_i^-)+\gamma g(z_{i+1}^+), & \text{for}\quad x=\alpha z_{i}^+ +\beta z_{i}^- +\gamma z_{i+1}^+, \\
    &\alpha,\beta,\gamma\geq 0,\alpha+\beta+\gamma=1,\\
    \alpha g(z_i^-)+\beta g(z_{i+1}^-)+\gamma g(z_{i+1}^+), & \text{for}\quad x=\alpha z_{i}^- +\beta z_{i+1}^- +\gamma z_{i+1}^+,\\
    &\alpha,\beta,\gamma\geq 0,\alpha+\beta+\gamma=1
  \end{array} \right.
\end{equation*}
is $(L+\delta)$-Lipschitz and such that $|f-g_{\kappa}|<\delta$ on $\er^2.$

Here we denoted $z_0^\pm:=\left(\pm\kappa, \frac{a+c}{2}\pm\frac{\kappa(a-c)}{2}\right)$, $z_n^\pm:=\left(\pm\kappa, \frac{b+d}{2}\pm\frac{\kappa(b-d)}{2}\right)$,
$z_i^\pm:=(\pm\kappa, a_i)$ for $i=1,...,n-1$ and $z_i:=(0,a_i)$ for $i=0,...,n.$

\end{lem}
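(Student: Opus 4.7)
I would construct $g_\kappa$ as the piecewise-affine interpolant of $f$ inside the central strip $P\cap([-\kappa,\kappa]\times\er)$, triangulated as in the statement by the diagonal $[z_i^-,z_{i+1}^+]$ inside each horizontal layer, and extended by $f$ itself on $P^\eps\setminus P$. The estimate $|g_\kappa-f|<\delta$ will follow from uniform continuity of $f$ on the compact set $P^\eps$: picking the intermediate nodes $a_2,\ldots,a_{n-2}$ to form a mesh fine enough that $f$ oscillates by less than $\delta$ on each triangle forces the vertex-matching affine interpolant to be within $\delta$ of $f$. The Lipschitz estimate splits into interior and boundary contributions.

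For an interior layer $i\in\{1,\ldots,n-2\}$, both $z_i^+$ and $z_i^-$ carry the common value $f(z_i)$, so on each of the two triangles of that layer the affine piece $g_\kappa$ is constant along the horizontal segment $[z_i^-,z_i^+]$; its gradient is therefore purely vertical, of magnitude $|f(z_{i+1})-f(z_i)|/(a_{i+1}-a_i)\leq L$ by the Lipschitz property of $f$. Thus the interior layers are already $L$-Lipschitz, independently of $\kappa$.

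The delicate step is the four boundary triangles at $i=0$ and $i=n-1$, in which $z_0^\pm$ (respectively $z_n^\pm$) lie on the slanted edges of $P$ and receive their own $f$-values, determined by the affine piece of $f$ on the adjacent component of $P^\eps\setminus P$. I would handle these by applying Lemma~\ref{square} after an affine change of coordinates sending $z_0$ to the origin and the lower slanted edge of $P$ onto the $y$-axis; then $f$ remains $L$-Lipschitz on $\er^2$ and is affine on a left-hand slab. Taking the rotated (and suitably scaled) image of $z_n-z_0$ as the target point $\tilde z$ of Lemma~\ref{square}, the lemma produces $\tilde x\in[(0,0),\tilde z]$ and $\gamma>0$ such that every segment extension from $w\in B((0,0),\gamma)\cap(\{0\}\times\er)$ to $y\in B(\tilde x,\gamma)$ is $(L+\delta/2)$-Lipschitz. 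Choosing $a_1$ so that (the image of) $z_1$ equals $\tilde x$, and $\kappa<\gamma$, the triangle $\conv\{z_0^+,z_0^-,z_1^+\}$ is controlled: along its base $[z_0^-,z_0^+]$ (on the slanted edge of $P$) the interpolant $g_\kappa$ agrees with the outer affine function, so every point in the triangle lies on a segment from some $w\in[z_0^-,z_0^+]$ to $z_1$ on which $g_\kappa$ coincides with Lemma~\ref{square}'s $(L+\delta/2)$-Lipschitz extension. The companion triangle $\conv\{z_0^-,z_1^-,z_1^+\}$ is easier, since $g_\kappa$ is constant with value $f(z_1)$ along $[z_1^-,z_1^+]$, forcing its gradient to be vertical and of magnitude arbitrarily close to $L$ as $\kappa\to 0$. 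The same procedure mirrored at $z_n$ fixes $a_{n-1}$ and shrinks $\kappa$ further.

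The main obstacle is exactly this boundary-triangle Lipschitz bound: a naive vertex-based estimate on the affine interpolant in $\conv\{z_0^+,z_0^-,z_1^+\}$ only bounds $|\nabla g_\kappa|$ by a constant depending on the slant $|a-c|$ of the edge (which can be arbitrarily large), not by $L+\delta$. Lemma~\ref{square} is precisely the device that converts the local affineness of $f$ just outside $P$ into the sharp inflation of the Lipschitz constant by $\delta$, by selecting $z_1$ as a Lebesgue-type differentiation point of the radial slopes of $f$ emanating from $z_0$.
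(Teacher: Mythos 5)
Your decomposition is the paper's: Lemma~\ref{square}, applied after a similarity taking $z_0$ to the origin and the adjacent slanted edge of $P$ (on which $f$ is affine) to the $y$-axis, fixes $a_1$, $a_{n-1}$ and a first bound on $\kappa$ so that the extreme triangles become $(L+\delta)$-Lipschitz, while the interior layers are handled by the observation that each affine piece is constant on a horizontal segment $[z_i^-,z_i^+]$ and hence has purely vertical gradient of magnitude at most $L$. Your diagnosis of the crux --- that a naive gradient bound on $\conv\{z_0^-,z_0^+,z_1^+\}$ degenerates with the slant $|a-c|$ and that Lemma~\ref{square} is exactly the repair --- is also the paper's.

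The gap is in the sentence ``the Lipschitz estimate splits into interior and boundary contributions.'' The domain of $g_\kappa$ is badly non-convex (the two components of $P^{\eps}\setminus P^{\circ}$ joined only by the thin strip $P\cap([-\kappa,\kappa]\times\er)$), so triangle-by-triangle gradient bounds do not concatenate: you must still compare a point $a$ of the strip with a point $b$ of $P^{\eps}\setminus P$ whose connecting segment leaves the domain, and a point near $z_0$ with a point near $z_n$. These cross estimates occupy roughly half of the paper's proof and force quantitative choices absent from your write-up. In particular, for $a\in S:=\co\{z_1^{\pm},z_{n-1}^{\pm}\}$ and $b\in P^{\eps}\setminus P$ one writes $|g_\kappa(a)-g_\kappa(b)|\le|g_\kappa(a)-f(a)|+|f(a)-f(b)|$, and since $|a-b|$ can be as small as $\alpha:=\dist(S,P^{\eps}\setminus P)$, one needs $|g_\kappa-f|\le\tfrac{\delta}{2}\alpha$ on $S$; that is, the mesh $a_2,\dots,a_{n-2}$ and the bound on $\kappa$ must be fine relative to $\alpha$, not merely fine enough that $|g_\kappa-f|<\delta$ as uniform continuity alone gives. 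Likewise, the comparison between the two end regions is what forces $a_1$ and $a_{n-1}$ to be taken close to $a_0$ and $a_n$ (which Lemma~\ref{square} permits, since its point $x$ can be chosen arbitrarily close to the origin) --- another constraint you never impose. None of this changes the architecture, but without it the claimed $(L+\delta)$ bound is established only on each piece separately, not on the whole domain.
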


\begin{proof}
Without any loss of generality we can suppose $L=1.$
Denote $P^{\eps}_i$ the connectivity component of $P^{\eps}\setminus P^{\circ}$ containing $z_i$, $i=0,n.$
When we will have $a_i$ found we will denote $P_{i}=\co\{c^{\pm}_i,c^{\pm}_{i+1}\}$ for $i=0,...,n-1.$


First use lemma~$\ref{square}$ to find $a_{1}\in B(a_{0},\frac{\min(|a_{0}-a_{n}|,1)}{2})$ and $a_{n-1}\in B(a_{n},\frac{\min(|a_{0}-a_{n}|,1)}{2})$ and $\kappa_1>0$
such that for every $\kappa>0$ the function 
$g|_{P^{\eps}_{0}\cup P_0}$ and $g|_{P^{\eps}_{n}\cup P_{n-1}}$ are both $(1+\delta)$-Lipschitz and such that $|f-g_\kappa|<\delta$ on $P^{\eps}\cup P_0\cup P_{n-1}.$


Observe that for every $u_0\in P^{\eps}_{0}\cup P_0$ and every $u_n\in P^{\eps}_{n}\cup P_{n-1}$ we have
$$
\begin{aligned}
\frac{|g_{\kappa}(u_0)-g_{\kappa}(u_n)|}{|u_0-u_n|}\leq &\frac{|g_{\kappa}(u_0)-g_{\kappa}(z_0)|}{|u_0-u_n|} + \frac{|g_{\kappa}(z_0)-g_{\kappa}(z_n)|}{|u_0-u_n|} + \frac{|g_{\kappa}(z_n)-g_{\kappa}(u_n)|}{|u_0-u_n|}\\
\leq & \frac{|u_0-z_0|}{|u_0-u_n|} + \frac{|z_0-z_n|}{|u_0-u_n|} + \frac{|z_n-u_n|}{|u_0-u_n|}.
\end{aligned}
$$
and since the last formula can be smaller than $1+\delta$ when we assume $|a_0-a_1|$ and $|a_{n-1}-a_n|$ to be small enough, we can additionally assume that
$g|_{P^{\eps}\cup P_0\cup P_{n-1}}$ is $(1+\delta)$-Lipschitz.

Next, note that the function $g_\kappa|_{[z_1,z_{n-1}]}$ is actually independent on $\kappa$ and that it is $1$-Lipschitz for any choice of $a_2,...,a_{n-2}$ 
(this is because in one dimension the affine extension never increases the Lipschitz constant).
This also means that for $S=\co\{c^{\pm}_1,c^{\pm}_{n-1}\}$ we have $g_\kappa|_{S}$ is $1$-Lipschitz for any choice of $a_2,...,a_{n-2}$ as well.
Put $\alpha=\dist(S,P^{\eps}\setminus P)$, we can assume $\kappa_2$ to be small enough that $1>\alpha>0$ (here we used the fact that $|a_0-a_1|,|a_{n-1}-a_n|\leq\frac{1}{2}$).
Consider $n$ big enough such that $\frac{|a_1-a_{n-1}|}{n-1}\leq\frac{\alpha\delta}{4}$, 
put $a_i=a_1+\frac{i|a_1-a_{n-1}|}{n-1}$ and pick $\kappa_3<\min(\kappa_2,\frac{\alpha\delta}{4}).$
Then for $\kappa<\kappa_3$ and $a\in S$
\begin{equation}
\begin{aligned}
|g_{\kappa}(a)-f(a)|&\leq |g_{\kappa}(a)-g_{\kappa}(z_i)|+|g_{\kappa}(z_i)-f(z_i)|+|f(z_i)-f(a)|\\
&\leq |a-z_i|+0+|a-z_i|\leq \frac{\delta}{2}<\delta,
\end{aligned}
\end{equation}
where $i$ is chosen such that $a\in P_i.$

To finish the proof we need to observe that for $\kappa<\kappa_3$ the function $g_\kappa$ is $(1+\delta)$-Lipschitz.
Since $S\cup P_0\cup P_{n-1}$ is convex, the remaining case we have to consider is $a\in S$ and $b\in P^{\eps}\setminus P.$
Find $i$ such that $a\in P_i$.
With this choice we have $|a-z_i|\leq\frac{\alpha\delta}{2}$ and therefore
$$
|b-z_i|\leq|a-b|+|a-z_i|\leq |a-b|+\frac{\alpha\delta}{2}\leq \left(1+\delta\right)|a-b|.
$$
Now,
$$
\begin{aligned}
|g_{\kappa}(a)-g_{\kappa}(b)|\leq &|g_{\kappa}(a)-g_{\kappa}(z_i)|+|g_{\kappa}(z_i)-g_{\kappa}(b)|\\
\leq & \frac{\delta\alpha}{2}+|f(z_i)-f(b)|\leq \frac{\delta}{2}|a-b|+|b-z_i|\\
\leq & \frac{\delta}{2}|a-b|+ \left(1+\frac{\delta}{2}\right) \cdot|a-b|\leq (1+\delta)|a-b|.
\end{aligned}
$$
\end{proof}

\begin{lem}\label{uvnitr}
Let $1>\eps>0$ and $\alpha,L>0.$
Let $f$ be a $L$-Lipschitz function on $[-1,1]^{2}$ which is affine on both $[-1,0]\times[-1,1]$ and $[0,1]\times[-1,1]$ (and equal to affine functions $f_{1}$ and $f_{2}$, respectively).
Put 
$$
A_{1}=[-1,0]\times[-1,-1/2], A_{2}=[0,1]\times[1/2,1], 
$$
$$
B_{1}^{\eps}=[0,\eps]\times[-1,\eps], B_{2}^{\eps}=[-\eps,0]\times[-\eps,1]
$$ 
and 
$$
A=A_{1}\cup A_{2}\cup B_{1}^{\eps}\cup B_{2}^{\eps}.
$$
Then either $f$ is convex on $[-1,1]^{2}$ or the function $g_{\eps}:A\to\er$ defined as 
\begin{equation*}
g(x) = \left\{
  \begin{array}{l l}
    f_{1}(x), &\text{if}\quad x\in A_{1}\cup B^{\eps}_{1},\\
    f_{2}(x), &\text{if}\quad x\in A_{2}\cup B^{\eps}_{2}.\\
  \end{array} \right.
\end{equation*}
is locally convex on $A.$
Moreover, if $\eps$ is small enough, $g_{\eps}$ is $(L+\alpha)$-Lipschitz and $|g_{\eps}-f|<\alpha$ on $A$.
\end{lem}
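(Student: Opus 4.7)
My plan is to exploit that $f_1$ and $f_2$ are affine and agree on the common boundary $\{0\}\times[-1,1]$, so their difference is pinned down by a single scalar. After translating so that $f(0,0) = 0$, I can write $f_1(x,y) = a_1 x + by$ and $f_2(x,y) = a_2 x + by$; setting $c := a_1 - a_2$, this gives $f_1(x,y) - f_2(x,y) = cx$. A one-line convexity test along the horizontal segment $y = 0$ at the origin shows that $f$ is convex on $[-1,1]^2$ if and only if $c \le 0$, so the only remaining case to handle is $c > 0$.

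Under the standing assumption $c > 0$, I verify local convexity of $g_\eps$ on $A$. Put $R_1 := A_1 \cup B_1^\eps$ (where $g_\eps = f_1$) and $R_2 := A_2 \cup B_2^\eps$ (where $g_\eps = f_2$); for $\eps < 1/2$ one checks $R_1 \cap R_2 = \{0\}\times[-\eps,\eps]$, and $g_\eps$ is well defined on $A = R_1 \cup R_2$ because $f_1 = f_2$ on this overlap. For a segment $[p,q] \subset A$ not contained in $R_1$ or in $R_2$, it must cross the $y$-axis at a single point $r = (0,r_2)$, and requiring a neighborhood of $r$ along the segment to lie in $A$ forces $r_2 \in [-1,-1/2]\cup[-\eps,\eps]\cup[1/2,1]$. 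When $r_2 \in [-1,-1/2]$ both sides sit in $R_1$ (left in $A_1$, right in $B_1^\eps$) and when $r_2 \in [1/2,1]$ both sides sit in $R_2$, so $g_\eps$ is affine along the segment near $r$. When $r_2 \in [-\eps,\eps]$ the left side sits in $B_2^\eps$ and the right side in $B_1^\eps$; the slope jump along the segment at $r$ is exactly $c\,d_1 > 0$, where $d_1 > 0$ is the $x$-component of the direction from left to right, so the kink is convex.

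For the quantitative part I would choose $\eps$ small. Since $|g_\eps - f| = |f_1 - f_2| = |c||x|$ on $B_1^\eps \cup B_2^\eps$ and $g_\eps = f$ elsewhere, the bound $|g_\eps - f| < \alpha$ holds as soon as $\eps < \alpha/|c|$. For the Lipschitz constant of $g_\eps$: if $p, q$ lie in the same $R_i$ then $|g_\eps(p)-g_\eps(q)| \le L|p-q|$ by affinity of $f_i$. If $p \in R_1$, $q \in R_2$ and the segment $[p,q]$ meets $\{x=0\}$ at some $r$, the identity $f_1(r)=f_2(r)$ gives $|g_\eps(p)-g_\eps(q)| \le |f_1(p)-f_1(r)| + |f_2(r)-f_2(q)| \le L(|p-r|+|r-q|) = L|p-q|$. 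The only remaining situation is $p \in R_1, q \in R_2$ with $p_1, q_1$ strictly of the same sign; enumeration leaves $\{p \in B_1^\eps, q \in A_2\}$ and $\{p \in A_1, q \in B_2^\eps\}$, in both of which the geometry of $A$ forces $|p-q| \ge 1/2 - \eps$, and a direct estimate yields $|g_\eps(p)-g_\eps(q)| \le L|p-q| + |c|\eps \le (L+\alpha)|p-q|$ for $\eps$ small enough.

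The main obstacle is this last "same-sided" case in the Lipschitz estimate: the segment never touches $\{x=0\}$, so the crossing-point identity is unavailable, and one must instead exploit the specific shape of $A$ to bound $|p-q|$ away from zero and absorb the constant-size error $|c|\eps$ as an $\alpha$-Lipschitz defect; all the other cases reduce either to affinity of $f_i$ or to a single application of $f_1 = f_2$ at the crossing.
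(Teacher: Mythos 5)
Your proof is correct and is exactly the kind of argument the paper intends: the paper's own proof of this lemma consists solely of the words ``Direct computation,'' and your reduction to the single parameter $c=a_1-a_2$ (with convexity of $f$ equivalent to $c\le 0$), the case analysis of where a segment in $A$ can meet the $y$-axis, and the lower bound $|p-q|\ge \tfrac12-\eps$ in the same-sided Lipschitz case supply that computation in full. No gaps.
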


\begin{proof}
Direct computation.
\end{proof}

\begin{lem}\label{nakraji}
Let $L,\alpha>0$ and $1>\gamma>\eps>0.$
Let $f$ be a $L$-Lipschitz function on $[-4,4]^{2}\cup [4,5]\times[1,2]$ which is affine on both $[-4,0]\times[-4,4]$ and $[0,4]\times[-4,4]\cup [4,5]\times[1,2]$ 
(and equal to affine functions $f_{1}$ and $f_{2}$, respectively).
Put 
$$
A_{1}=[0,\gamma]\times[-3,-2], A_{2}=[\gamma,\gamma+\eps]\times[-3,0], A_{3}=[\gamma-\eps,\gamma]\times[-1,2], 
$$
$$
A_{4}=[\gamma,4]\times[1,2], B_{1}=[-4,0]\times[-4,4], B_{2}=[4,5]\times[1,2],
$$ 
and
$$
A=A_{1}\cup A_{2}\cup A_{3}\cup A_{4}\cup B_{1}\cup B_{2}.
$$
Then either $f$ is locally convex on $[-4,4]^{2}\cup [4,5]\times[1,2]$ or the function 
\begin{equation*}
g(x) = \left\{
  \begin{array}{l l}
    f_{1}(x), &\text{if}\quad x\in A_{1}\cup A_{2}\cup B_{1},\\
    f_{2}(x)+\frac{f_{1}(\gamma,0)-f_{1}(0,0)-f_{2}(\gamma,0)+f_{1}(0,0)}{\gamma-4}(x\cdot(1,0)-4), &\text{if}\quad x\in A_{3}\cup A_{4},\\
    f_{2}(x), &\text{if}\quad x\in B_{2},\\
  \end{array} \right.
\end{equation*}
is $(L+\alpha)$-Lipschitz, locally convex on $A$ and $|f-g|<\alpha$ on $A,$ if $\eps$ and $\gamma$ are small enough.
\end{lem}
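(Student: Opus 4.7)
The plan is to do a case split: if $f$ is locally convex on the whole domain we are in the first alternative, so assume it is not. Because $f_1$ and $f_2$ are affine and, by continuity of $f$, coincide on $\{0\}\times[-4,4]$, their difference has the form $f_2(x)-f_1(x) = cx_1$ for some $c\in\er$; failure of local convexity of $f$ at the seam $\{x_1=0\}$ forces $c<0$. Writing $k := (f_1(\gamma,0)-f_2(\gamma,0))/(\gamma-4) = -c\gamma/(\gamma-4) < 0$, the second case of $g$ becomes $\tilde f_2(x) := f_2(x) + k(x_1-4)$.

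I would first verify well-definedness via the two algebraic identities $\tilde f_2(4,x_2) = f_2(4,x_2)$ and $\tilde f_2(\gamma,x_2) = f_2(\gamma,x_2) - c\gamma = f_1(\gamma,x_2)$, both direct from the choice of $k$; these show that $g$ is continuous across the overlaps $A_4\cap B_2$ and $A_2\cap A_3$. For the quantitative estimates, $\tilde f_2$ has Lipschitz constant at most $L+|k|$, with $|k|=|c|\gamma/(4-\gamma)\to 0$ as $\gamma\to 0$; and $|g-f|$ is bounded by $|c|(\gamma+\eps)$ on $A_1\cup A_2$ (where $g=f_1$ but $f=f_2$), by $4|k|$ on $A_3\cup A_4$, and vanishes on $B_1\cup B_2$. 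Shrinking $\gamma$ first and then $\eps$ makes every bound smaller than $\alpha$.

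The heart of the argument is local convexity. Since $g$ is piecewise affine, it suffices to verify the condition on segments in $A$ crossing a seam between two pieces carrying different affine formulas; the only such seams are the vertical lines $\{x_1=\gamma\}$ (between $A_3$ and $A_2$) and $\{x_1=4\}$ (between $A_4$ and $B_2$). The gradient jumps from left to right are
\[
\nabla f_1 - \nabla\tilde f_2 = -(c+k)(1,0) = \frac{-4c}{4-\gamma}(1,0) \quad\text{and}\quad \nabla f_2 - \nabla\tilde f_2 = -k(1,0),
\]
both positive multiples of $(1,0)$ since $c<0$ and $k<0$. Because each seam is vertical, any segment in $A$ crossing it has a nonzero $x_1$-component in its direction, so the directional derivative of $g$ increases in the direction of traversal, giving convexity along the segment.

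The main obstacle is the sign bookkeeping in the last step: the whole construction rests on the identity $k(\gamma-4)=-c\gamma$, which converts the ``bad'' downward kink of $f$ at $\{x_1=0\}$ into an upward kink of $g$ at $\{x_1=\gamma\}$ (so that $-(c+k)>0$) while leaving only a small compensating upward kink at $\{x_1=4\}$ (so that $-k>0$). Confirming both sign conditions simultaneously, and keeping $|k|$ small in $\gamma$, is the combinatorial core of the lemma and explains the specific geometry of the region $A$.
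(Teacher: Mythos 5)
Your treatment of continuity, of the sign bookkeeping ($c<0$, $k=c\gamma/(4-\gamma)<0$, $-(c+k)=-4c/(4-\gamma)>0$), of local convexity via the gradient jumps at the two vertical seams, and of the bound $|f-g|<\alpha$ all match the paper's construction and are correct. The gap is in the Lipschitz estimate. You record only that each affine piece of $g$ is $(L+|k|)$-Lipschitz and that $\sup|g-f|$ is small, but on the non-convex set $A$ this does not imply that $g$ is $(L+\alpha)$-Lipschitz: there are pairs of points lying in pieces carrying different formulas whose mutual distance is of the same order as $|g-f|$. Concretely, take $x\in B_1$ with $x_1=0$ and $y\in A_4$ with $y_1=\gamma$, so $|x-y|$ can equal $\gamma$, while your bound on $A_4$ is only $|g(y)-f(y)|\le 4|k|=4|c|\gamma/(4-\gamma)$; the triangle inequality through $f$ then yields a Lipschitz constant no better than $L+4|c|/(4-\gamma)$, and $|c|$ can be as large as $2L$, so this does not shrink with $\gamma$. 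The same failure occurs for $x\in B_1$, $y\in A_3$, where $\dist(B_1,A_3)=\gamma-\eps$. The pairs straddling the seams themselves ($A_2$ against $A_3$, and $A_4$ against $B_2$) are at distance $0$, so the sup-norm route is unavailable there as well.

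These cases are exactly where most of the paper's proof is spent, and each needs its own argument. For $x\in B_1$, $y\in A_4$ the paper uses the sandwich $f(y)=f_2(y)\le g(y)\le f_1(y)$ on $A_4$ (valid precisely because $c<0$ and $k<0$), which combined with $g=f_1=f$ on $B_1$ gives $|g(x)-g(y)|\le L|x-y|$ outright. For $x\in B_1$, $y\in A_3$ one must compare $g(y)=\tilde f_2(y)$ with $f_1(y)$ rather than with $f(y)$: $|g(y)-f_1(y)|=|c+k|(\gamma-y_1)\le 4|c|\eps/(4-\gamma)$, which divided by $\gamma-\eps$ is small only if $\eps$ is small relative to $\gamma$ — this is the step that dictates the order of the choice of parameters, not merely that both be small. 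For the touching pairs one can note that on $A_2\cup A_3$ (resp. $A_4\cup B_2$) the function $g$ coincides with the maximum of the two affine functions involved, whose Lipschitz constant is the larger of the two; this is again the convexity of the kink. Finally, your reduction "only segments crossing one of the two seams matter" is true but uses unstated geometric facts about $A$ (e.g. that no segment of $A$ joins $B_1$ to $A_3$ or to $A_4$); these should be checked, since they are the reason for the staircase shape of $A$.
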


\begin{proof}
Without any loss of generality we can suppose $L=1.$
First we prove that $g$ is continuous on $A.$
To do this we need to prove that
\begin{equation}\label{cont1}
f_{1}(\gamma,a)=f_{2}(\gamma,a)+\frac{f_{1}(\gamma,0)-f_{1}(0,0)-f_{2}(\gamma,0)+f_{1}(0,0)}{\gamma-4}((\gamma,a)\cdot(1,0)-4)
\end{equation}
whenever $(\gamma,a)\in A$ and that
\begin{equation}\label{cont2}
f_{2}(4,a)=f_{2}(4,a)+\frac{f_{1}(\gamma,0)-f_{1}(0,0)-f_{2}(\gamma,0)+f_{1}(0,0)}{\gamma-4}((4,a)\cdot(1,0)-4)
\end{equation}
whenever $(4,a)\in A.$
Define an affine function $f_{3}$ on $\er^{2}$ as
$$
f_{3}(u,v)=\frac{f_{1}(\gamma,0)-f_{1}(0,0)-f_{2}(\gamma,0)+f_{1}(0,0)}{\gamma-4}((u,v)\cdot(1,0)-4).
$$
To prove (\ref{cont1}) we can write
\begin{equation*}
\begin{aligned}
g(\gamma,a)=&f_{2}(\gamma,a)+f_{3}(\gamma,a)\\
=&f_{2}(\gamma,a)+\frac{f_{1}(\gamma,0)-f_{1}(0,0)-f_{2}(\gamma,0)+f_{1}(0,0)}{\gamma-4}\cdot(\gamma-4)\\
=&f_{2}(\gamma,a)+f_{1}(\gamma,0)-f_{1}(0,0)-f_{2}(\gamma,0)+f_{2}(0,0)\\
=&f_{2}(\gamma,a)+f_{1}(\gamma,a)-f_{1}(0,a)-f_{2}(\gamma,a)+f_{2}(0,a)\\
=&f_{2}(\gamma,a)+f_{1}(\gamma,a)-f_{1}(0,a)-f_{2}(\gamma,a)+f_{1}(0,a)=f_{1}(\gamma,a).
\end{aligned}
\end{equation*}
To prove (\ref{cont2}) we can write
\begin{equation*}
\begin{aligned}
g(4,a)=&f_{2}(4,a)+f_{3}(4,a)\\
=&f_{2}(4,a)+\frac{f_{1}(\gamma,0)-f_{1}(0,0)-f_{2}(\gamma,0)+f_{1}(0,0)}{\gamma-4}(4-4)
=f_{2}(4,a).
\end{aligned}
\end{equation*}
Next note that since both $f_{1}$ and $f_{2}$ are $1$-Lipschitz we have
\begin{equation}\label{lip0}
g \text{ is $1$-Lipschitz on } B_{1}\cup A_{1}\cup A_{2},
\end{equation}
and
\begin{equation}\label{lip1}
g \text{ is $1$-Lipschitz on } B_{2},
\end{equation}
also since additionally $f_{3}$ is constant on all lines parallel to $y$-axis and since
$$
\frac{f_{3}(\gamma,0)-f_{3}(4,0)}{4-\gamma}\leq\frac{f_{1}(\gamma,0)-f_{1}(0,0)-f_{2}(\gamma,0)+f_{2}(0,0)-0}{3}\leq \frac{2\gamma }{3}\leq\gamma.
$$
we have
\begin{equation}\label{lip2}
g \text{ is $(1+\gamma)$-Lipschitz on }  A_{4}\cup A_{3}.
\end{equation}
and
\begin{equation}\label{lip3}
|g-f_{2}|\leq 4\gamma \text{ on }  A_{4}\cup A_{3}.
\end{equation}
Now, if $x\in B_{1}$ and $y\in A_{3}$ then $g(x)=f_{1}(x)$, $|g(y)-f_{1}(y)|\leq 3\eps$ and $|x-y|\geq \gamma-\eps$ and therefore
$$
|g(x)-g(y)|\leq |g(x)-f_{1}(y)|+|f_{1}(y)-g(y)|\leq |x-y|+ 3\eps\leq \frac{\gamma+2\eps}{\gamma-\eps}.
$$
So
\begin{equation}\label{lip4}
g \text{ is $\frac{\gamma+2\eps}{\gamma-\eps}$-Lipschitz on }  B_{1}\cup A_{3}.
\end{equation}
If $x\in B_{1}$ and $y\in A_{4}$ then $g(x)=f_{1}(x)$, $f(y)\leq g(y)\leq f_{1}(y)$ and therefore
\begin{equation}\label{lip5}
g \text{ is $1$-Lipschitz on }  B_{1}\cup A_{4}.
\end{equation}
Using (\ref{lip1}) and (\ref{lip2}) and continuity of $g$ we obtain that
\begin{equation}\label{lip6}
g \text{ is $(1+\gamma)$-Lipschitz on }  A_{2}\cup A_{3} \text{ and on } B_{2}\cup A_{4}.
\end{equation}
Finally, if $x\in A_{1}\cup A_{2}$ and $y\in A_{4}\cup B_{2}$ or $x\in A_{1}$ and $y\in A_{3}\cup A_{4}\cup B_{2}$ we have
\begin{equation}\label{lip7}
|g(x)-f_{2}(x)|\leq 2(\gamma+\eps)\leq 4\gamma,\;\; 
|g(y)-f_{2}(y)|\leq 4\gamma
\end{equation}
and $|x-y|\geq1.$
This implies
\begin{equation}
\begin{aligned}
|g(x)-g(y)|\leq &|g(x)-f_{2}(x)|+|f_{2}(x)-f_{2}(y)|+|f_{2}(y)-g(y)|\\
\leq &4\gamma+|x-y|+4\gamma\leq (1+8\gamma)|x-y|.
\end{aligned}
\end{equation}

Now, according to (\ref{lip0}), (\ref{lip1}), (\ref{lip2}), (\ref{lip3}), (\ref{lip4}), (\ref{lip5}), (\ref{lip6}) and (\ref{lip7}) it is sufficient to choose $\frac{\alpha}{4}>\gamma>\eps>0$ small enough such that
$$
\max\left(1+8\gamma,\frac{\gamma+2\eps}{\gamma-\eps}\right)<1+\alpha
$$
to obtain that $g$ is $(1+\alpha)$-Lipschitz on $A$ and $|f-g|<\alpha$ on $A.$
\end{proof}

\begin{lem}\label{had}
Under the assumptions of Lemma~\ref{points} there is a $\frac{1}{2}>\kappa>0$, $R\subset P^{\circ}\cap \er\times(-\kappa,\kappa)$ and a function $h:(P^\eps\setminus P)\cup R\to\er$ such that:
\begin{itemize}
\item[(a)] $R\in\Q$,
\item[(b)] $h=f$ on $P^\eps\setminus P^{\circ}$,
\item[(c)] $h$ is locally convex on $(P^\eps\setminus P^{\circ})\cup R$,
\item[(d)] $(P^\eps\setminus P)\cup R$ is connected,
\item[(e)] $h$ is piecewise affine on $(P^\eps\setminus P^{\circ})\cup R$,
\item[(f)] $h$ is $(L+\delta)$-Lipschitz.
\end{itemize} 
\end{lem}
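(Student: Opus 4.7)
The plan is to begin with the $(L+\delta/3)$-Lipschitz piecewise affine function $g_\kappa$ produced by Lemma~\ref{points} (applied with tolerance $\delta/3$ in place of its ``$\delta$''), whose domain $D:=P^\eps\setminus(P^{\circ}\setminus[-\kappa,\kappa]\times\er)$ is already the outer ``annulus'' $P^\eps\setminus P^{\circ}$ glued to the central vertical strip, and the strip is naturally partitioned into horizontal rectangles $R_i:=[-\kappa,\kappa]\times[a_i,a_{i+1}]$ for $i=0,\dots,n-1$. On each $R_i$ the function $g_\kappa$ is affine on the two triangles meeting along the diagonal $[z_i^-,z_{i+1}^+]$, so either $g_\kappa$ is already locally convex on $R_i$ (in which case I retain the whole rectangle inside $R$), or it fails to be locally convex across that diagonal and I carve $R_i$ into a sub-polytope on which a locally convex replacement is supplied by Lemma~\ref{uvnitr} (for the interior rectangles $1\le i\le n-2$) or Lemma~\ref{nakraji} (for the two extremal rectangles $R_0$ and $R_{n-1}$ that touch a slanted edge of $P$).

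More concretely, for each interior rectangle on which convexity fails, an affine rescaling identifies $R_i$ with the square $[-1,1]^2$ in Lemma~\ref{uvnitr}, with the two triangles of $g_\kappa$ taking the roles of the two affine halves; the lemma then produces a locally convex piecewise affine modification on a sub-polytope $A\subset R_i$ at the cost of an increase $\alpha_i$ of the Lipschitz constant, and a symmetric argument with Lemma~\ref{nakraji} (designed precisely for the case where $g_\kappa$ inherits boundary values from $f$ on a slanted edge of $P$) handles the extremal rectangles. Choose the $\alpha_i$ with $\sum_i\alpha_i<\delta/3$, let $R$ be the union of all retained or carved sub-polytopes in the strip, and define $h$ by gluing $g_\kappa$ on the unmodified part of $D$ with the local replacements. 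Properties~(a), (b), (e) are immediate from the construction, (c) follows piece by piece (local convexity on each retained triangle by the diagonal slope check, and on each carved piece by design), and (f) follows by summing the Lipschitz increments: $L+\delta/3$ from Lemma~\ref{points} plus $\sum_i\alpha_i<\delta/3$ from the modifications, well below $L+\delta$. A small additional check is needed at the shared edges $[z_i^-,z_i^+]$ between consecutive retained pieces, but this reduces to a local inequality on slopes in the $y$-direction that one can enforce by taking the subdivision fine enough in Lemma~\ref{points}.

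The main obstacle is condition (d), the connectivity of $(P^\eps\setminus P)\cup R$. The set $P^\eps\setminus P$ has two components, an upper and a lower ``collar'' of $P$, so $R$ must carry a path across $P$. A direct inspection of the set $A=A_1\cup A_2\cup B_1^\eps\cup B_2^\eps$ in Lemma~\ref{uvnitr} shows that the two ``L-shaped'' pieces $A_1\cup B_1^\eps$ and $A_2\cup B_2^\eps$ intersect along the edge $\{0\}\times[-\eps,\eps]$ (contributed by $B_1^\eps\cap B_2^\eps$), so $A$ is in fact connected and reaches both horizontal edges $y=\pm 1$ of the unit square; translated back to $R_i$, each carved piece still joins its upper edge to its lower edge. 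Stacking along consecutive rectangles through the shared edges $[z_i^-,z_i^+]$ then produces a continuous path inside $R$ from the top to the bottom of the strip, and the extremal rectangles handled by Lemma~\ref{nakraji} link this path to the slanted edges of $P$, and hence to both collars in $P^\eps\setminus P$, yielding (d).
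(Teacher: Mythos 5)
Your overall strategy (start from $g_\kappa$ of Lemma~\ref{points}, then repair the non-convexity using Lemmas~\ref{uvnitr} and~\ref{nakraji} on a connected subset $R$ of the central strip) is the right one, but you misplace the actual difficulty and then dismiss it with a false claim. For the interior indices one has $g_\kappa(z_i^+)=g_\kappa(z_i^-)=f(z_i)$, so on each rectangle $[-\kappa,\kappa]\times[a_i,a_{i+1}]$ the two triangles of $g_\kappa$ carry the \emph{same} affine function (its gradient is vertical, determined by $f(z_i)$ and $f(z_{i+1})$); there is no convexity failure across the diagonals at all. The failure occurs across the cross-cuts $[z_i^-,z_i^+]$ shared by consecutive rectangles, where the slope in the $y$-direction jumps from $\frac{f(z_i)-f(z_{i-1})}{a_i-a_{i-1}}$ to $\frac{f(z_{i+1})-f(z_i)}{a_{i+1}-a_i}$. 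You relegate this to ``a local inequality on slopes in the $y$-direction that one can enforce by taking the subdivision fine enough'', but that inequality is exactly convexity of the piecewise linear interpolation of $f$ along the central axis, and if $f$ is strictly concave there no refinement will ever produce it. This is the essential obstruction of the whole paper: the non-convexity cannot be repaired, only hidden by making $R$ avoid every segment that crosses a cross-cut ``the wrong way''. Consequently you also cannot keep entire rectangles (let alone two adjacent ones) inside $R$: their union contains vertical segments passing through several $z_i$, on which $h$ would be close to the non-convex interpolant and hence could not be locally convex. The paper instead takes $R$ to be a thin ``snake'': small pairwise disjoint squares $M_i$ centered on the cross-cuts, inside which Lemma~\ref{uvnitr} swaps the two affine pieces in an $\eps$-neighbourhood of the interface so that every segment of $R$ crossing $[z_i^-,z_i^+]$ does so in the convex direction, joined by thin tubes $T_i$ lying in the affine interiors of the rectangles.

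A second gap is the Lipschitz bookkeeping: ``$L+\delta/3$ plus $\sum_i\alpha_i<\delta/3$'' is not a valid argument, because the Lipschitz condition here is with respect to the Euclidean distance on a non-convex domain, so local Lipschitz constants on adjacent pieces cannot simply be added or chained along the straight segment $[a,b]$, which may leave the domain. The paper's estimate for $a\in M_i$, $b\in M_j$, $i\neq j$, uses that the modified regions are separated by $\Omega=\min_{i\neq j}\dist(M_i,M_j)>0$ and that each modification changes $g_\kappa$ by at most $\Omega\delta/4$ in the sup norm, so the perturbation is absorbed into $|a-b|\geq\Omega$. In your construction the modified pieces in consecutive rectangles touch (indeed they must, since you use this contact for connectivity), so this mechanism is unavailable and no replacement is offered.
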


\begin{proof}
Without any loss of generality we can suppose $L=1.$
Let $\kappa$, $z_i$ $g_\kappa$ as in Lemma~\ref{points}, but with $\frac{\delta}{2}$ in the place of $\delta.$
Consider the sets
$$
X=[-4,4]^{2}\cup [4,5]\times[1,2]\quad\text{and}\quad Y=[-1,1]^{2}.
$$

Find similarities $\Psi_i:\er^2\to\er^2$, $i=0,...,n$ such that if we put $M_i=\Psi_i(X),$ $i=0,n$ and $M_i=\Psi_i(Y),$ $i=1,...,n-1$ we have
\begin{itemize}
\item[(A)] $M_i\cap M_j=\emptyset$ if $i\not=j,$
\item[(B)] $\Psi_0([-4,0]\times[-4,4])\subset\P^{\eps}\setminus P^{\circ},$
\item[(C)] $\Psi_{n}([-4,0]\times[-4,4])\subset\P^{\eps}\setminus P^{\circ},$
\item[(D)] $M_i\subset \er\times(-\kappa,\kappa),$
\item[(E)] $[z^-_i,z^+_i]\subset\Psi_i(\{0\}\times\er),$
\item[(F)] $\Psi_i$ preserves orientation for $i=1,...,n-1$
\end{itemize}

Put $\Omega=\min_{i\not=j}\dist(M_i,M_j),$ note that $\Omega>0$ due to property $(A).$
Define 
$$
T_i:=\co\{\Psi_i((1,\frac{1}{2}),\Psi_i)(1,1),\Psi_{i+1}((-1,-\frac{1}{2}),\Psi_{i+1})(-1,-1)\},
$$
for $i=1,...,n-2$,
$$
T_0:=\co\{\Psi_0(5,1),\Psi_0(5,2),\Psi_{1}(-1,-\frac{1}{2}),\Psi_{1}(-1,-1)\}
$$
and 
$$
T_{n-1}:=\co\{\Psi_n(5,1),\Psi_n(5,2),\Psi_{n-1}(1,\frac{1}{2}),\Psi_{n-1}(1,1)\}.
$$
and put 
\begin{equation}\label{sjednoceni}
R:=\left(\bigcup_{i=0}^{n-1} T_i\right) \cup \left(\bigcup_{i=0}^{n} M_i\right).
\end{equation}

Let $\rho_i$ be scaling ratio of $\Psi_i.$
Let $g_i,$ $i=1,...,n-1$ be the function $g$ from Lemma~\ref{uvnitr} with $\alpha=\frac{\Omega\delta\rho_i}{4}$ (and corresponding $\eps$) and with $f_1(x)=\rho_i\kappa\circ\Psi_i$ and $f_2(x)=\rho_i\kappa\circ\Psi_{i}$ (with the exception if $g_{\kappa}$ is already convex on $M_i$, in which case we put $g_i=g_\kappa|_{M_{i}}$),
let $g_0$ be the function $g$ from Lemma~\ref{nakraji} with $\gamma=\frac{\Omega\delta\rho_i}{4}$ (and corresponding $\eps$ and $\gamma$) and with $f_1=\rho_0\kappa\circ\Psi_0$ and 
$f_2=\rho_0\kappa\circ\Psi_{0}$
and finally, let $g_n$ be the function $g$ from Lemma~\ref{nakraji} with $\gamma=\frac{\Omega\delta\rho_i}{4}$ (and corresponding $\eps$ and $\gamma$) and with $f_1=\rho_n\kappa\circ\Psi_n$ and 
$f_2=\rho_n\kappa\circ\Psi_{n}$.

Consider now the function $h$ defined by the formula
$$
h=
\begin{cases}
\frac{1}{\rho_i}g_i\circ\Psi_i^{-1} \quad\text{on}\quad M_i\\
g_\kappa \quad\text{otherwise}.
\end{cases}
$$ 

Property $(a)$ follows from (\ref{sjednoceni}) and the fact that every $M_i$ and every $T_i$ is a polygon. 
Properties $(b),(c)$ and $(e)$ follow directly from the construction and corresponding properties of the functions $g_i$ and property $(d)$ is obvious.
We will now finish the proof by proving property $(f).$

So suppose that $a,b\in (P^\eps\setminus P)\cup R.$ we need to prove that $|h(a)-h(b)|\leq (1+\delta)|a-b|.$
We can additionally suppose that either $a$ or $b$ belongs to some $M_i$ since otherwise there is nothing to prove.
We will prove only the case $a\in M_i$, $b\in M_j$, $i\not=j$, the other cases can be proved following the same lines.
By Lemma~\ref{uvnitr} (for $i=1,...,n-1$) and Lemma~\ref{nakraji} (for $i=0,n$) we can now write
$$
\begin{aligned}
|h(a)-h(b)|\leq &|h(a)-g_\kappa(a)|+|g_\kappa(a)-g_\kappa(b)|+|g_\kappa(b)-h(b)|\\
< &\frac{1}{\rho_i}\cdot\frac{\Omega\delta\rho_i}{4}+\left(1+\frac{\delta}{2}\right)\cdot|a-b|+\frac{1}{\rho_j}\cdot\frac{\Omega\delta\rho_j}{4}\\
\leq &\frac{\delta}{2}|a-b|+\left(1+\frac{\delta}{2}\right)\cdot|a-b|= (1+\delta)|a-b|,
\end{aligned}
$$
which is what we need.
\end{proof}

\begin{proof}[Proof of Lemma~\ref{keylemma}]
Without any loss of generality we can suppose $L=1.$
Let $V$ be the set of all points $v\in\partial P$ with the property that there is some $\eps_v>0$ such that 
$P\cap B(v,\eps_v)$ is similar to $\{(x,y):x\geq 0\}\cap B(0,1)$ and that $f$ is affine on $P\cap B(v,\eps_v)$.
Since $P\in\Q$, the set $\partial P\setminus V$ is finite and we can without any loss of generality assume that $l(\eps)\cap (\partial P\setminus V)=\emptyset.$

This means that the closure of every bounded component $C_i$ of $P\cap l(\eps)$ is a similar copy of 
$$
\co\{(-1,a_i),(-1,b_i),(1,c_i),(1,d_i)\}=:P_i
$$ 
for some $a_i<b_i$, $c_i<d_i$ and such that for some $\eps_i>0$
$f$ is locally affine on $P_i^{\eps_i}\setminus P$,
where 
$$P_i^{\eps_i}:=\co\{(-1,a_i-\eps_i),(-1,b_i+\eps_i),(1,c_i-\eps_i),(1,d_i+\eps_i)\}.
$$
Then 
$$
\alpha=\min_{i\not= j}\dist(C_i,C_j)>0
$$
Let $\Psi_i$ be a similarity between $C_i$ and $S_i$ and let $\kappa_i$, $R_i$ and $h_i$ be $\kappa$, $R$ and $h$ as obtained from Lemma~\ref{had} for
$\eps=\eps_i$, $P=P_i$, $f=\rho_i g\circ \Psi_i$ and $\delta=\frac{\min(\alpha,\eps_i,1)\rho_i\eps}{4}$, where $\rho_i$ is the similarity ratio on $\Psi_i.$

Put $Q=P\setminus(\bigcup R_i)$ and define $\tilde h:Q^c\to\er$ by
$$
\tilde h=
\begin{cases}
\frac{1}{\rho_i}h_i\circ\Psi_i^{-1} \quad\text{on}\quad R_i\\
g \quad\text{otherwise}.
\end{cases}
$$

Let $K$ be the Lipschitz constant of $\tilde h,$ the using the Kirszbraun theorem on extensions of Lipschitz functions we can find a $K$-Lipschitz function $h$ on $\er^2$ such that
$h=\tilde h$ on $P^c.$

Now, property $(1)$ follows directly form the definition of $Q$ and $(a)$ in Lemma~\ref{had}, 
property $(2)$ from the definition of $h$ and $(b)$ in Lemma~\ref{had} 
and property $(3)$ from $(d)$ in Lemma~\ref{had}.

It remains to prove that the pair $(Q,h)$ is $(1+\eps)$-good.
The local convexity and piecewise affinity of $h$ on $Q^c$ follows from $(c)$ and $(e)$ in Lemma~\ref{had} and the corresponding properties of $g$,
so the proof will be finished, if we verify that $K\leq(1+\eps).$

To do this pick $a,b\in\er^2$, we need to prove that $|h(a)-h(b)|\leq (1+\eps)|a-b|.$

We can additionally suppose that either $a$ or $b$ belongs to some $R_i$ since otherwise there is nothing to prove.
We will prove only the case $a\in R_i$, $b\in R_j$, $i\not=j$, the other cases can be proved following the same lines.

Using the definition of $h$, namely property $(f)$ from Lemma~\ref{had} we can now write
$$
\begin{aligned}
|h(a)-h(b)|= &|h_i(a)-h_j(b)|\leq |h_i(a)-f(a)|+|f(a)-f(b)|+|f(b)-h_j(b)|\\
\leq &\frac{1}{\rho_i}\cdot\frac{\min(\alpha,\eps_i)\rho_i\eps}{4}+\left(1+\frac{\eps}{4}\right)\cdot|a-b|+\frac{1}{\rho_j}\cdot\frac{\min(\alpha,\eps_j)\rho_j\eps}{4}\\
\leq &\frac{2\eps}{4}|a-b|+\left(1+\frac{\delta}{2}\right)\cdot|a-b|< (1+\delta)|a-b|.
\end{aligned}
$$

\end{proof}

 {\bf Acknowledgment}. I would like to thank Professor Lud\v ek Zaj\' i\v cek for finding all the historical information and to Professor Ji\v r\' i Jel\' inek for translating the original argument by Pasqualini and also for many comments on the previous versions of the manuscript.

\end{document}